\newcommand{\1}{\mathbbm {1}}
\newcommand{\Z}{{\mathbb Z}}
\newcommand{\C}{{\mathbb C}}
\newcommand{\R}{{\mathbb R}}
\newcommand{\h}{{\mathfrak h}}
\newcommand{\wh}{{\widehat{\mathfrak h}}}
\def\<{\langle}
\def\>{\rangle}
\def\a{\alpha}
\def\b{\beta}
\def\g{\mathfrak g}
\def\h{\mathfrak h}
\def\wt{\rm wt}
\newcommand{\mraff}{\mathrm{aff}}
\newcommand{\e}{\epsilon}
\newtheorem{thm}{Theorem}[section]
\newtheorem{lem}[thm]{Lemma}
\newtheorem{cor}[thm]{Corollary}
\begin{document}

\begin{center}
{\Large {\bf  On $C_2$-cofiniteness of parafermion vertex operator algebras
}} \\
\vspace{0.5cm}


Chongying Dong\footnote{Supported by NSF grants,
and  a Faculty research grant from  the
University of California at Santa Cruz.}
\\
Department of Mathematics, University of
California, Santa Cruz, CA 95064\\
\vspace{.1cm}
\& School of Mathematics,  Sichuan University, Chengdu, 610065 China\\
\vspace{.2 cm}
 Qing Wang\footnote{Supported by China NSF grants(No.10931006, No.10926040),
 and Natural Science Foundation of Fujian Province, China(No.2009J05012).}\\
School of Mathematical Sciences, Xiamen University,  Xiamen,
361005 China
\end{center}




\begin{abstract}
It is proved that the regularity of parafermion vertex operator algebras
associated to  integrable highest weight modules for affine Kac-Moody algebra
$A_1^{(1)}$ implies the $C_2$-cofiniteness of parafermion vertex operator algebras associated to integrable highest weight modules for any
affine Kac-Moody algebra. In particular,
the parafermion vertex operator algebra associated to an
integrable highest weight module of small level for any affine Kac-Moody
algebra is $C_2$-cofinite and has only finitely many
irreducible modules. Also, the parafermion vertex operator
algebras with level 1 are determined explicitly.
\end{abstract}


\section{Introduction}
\def\theequation{1.\arabic{equation}}
\setcounter{equation}{0}

This paper is devoted to the study of the $C_2$-cofiniteness of parafermion
 vertex operator algebra $K(\g,k)$ associated to integrable highest weight
module of level $k$  for affine Kac-Moody algebra $\widehat\g$,
where $\g$ is a finite dimensional simple Lie algebra. It is
established that the regularity of $K(sl_2,k)$ for all $k$ implies
the $C_2$-cofiniteness of $K(\g,k)$ for all $k.$ In particular,
the $C_2$-cofiniteness of $K(\g,k)$ is obtained if $\g$ is of
$ADE$ type and $k\leq 6,$ $\g$ is of type $G_2$ and $k\leq 2,$ and
$\g$ is of type $B_l,C_l,F_4$ and $k\leq 3.$ The structure and
representation theory of $K(\g,1)$ is analyzed in details. It
turns out that $K(\g,1)$ for $\g$ being of type $B_l, C_l,
F_4,G_2$ is isomorphic to the parafermion vertex operator algebra
of type $A$ with level $k=2$ or $3.$

The origin of the parafermion vertex operator algebras is the
$Z$-algebras developed in \cite{LP}, \cite{LW1}, \cite{LW2} in the
construction of integrable highest weight modules for affine
Kac-Moody algebras. It was investigated in \cite{ZF} how the
$Z$-algebras and $Z$-operators lead to a new conformal field
theory -- parafermion conformal field theory. This further
stimulated the development of the theory of generalized vertex
operator algebras \cite{DL} which, in turns, provides a framework
for the parafermion conformal field theory. It is well-known that
the affine Kac-Moody Lie algebra $\widehat\g$ contains a
Heisenberg algebra. As a result, the vertex operator algebra
${\cal L}(k,0)$ associated to the highest weight module of level
$k$ for $\widehat\g$ contains the Heisenberg vertex operator
algebra as a subalgebra. The parafermion vertex operator algebra
$K(\g,k)$ is the commutant  \cite {FZ}, \cite{LL} of the
Heisenberg vertex operator algebra in ${\cal L}(k,0),$ and is a
special kind of coset construction \cite{GKO}.

Some aspects of both the structure and representation theory of
parafermion vertex operator algebras have been studied in
\cite{DLY1}-\cite{DLWY},\cite{DW}. In particular, a set of
generators for general parafermion vertex operator algebra
$K(\g,k)$ is obtained. It is also shown that $K(\g,k)$ is
generated by $K(sl_2,k_{\alpha})$ for positive roots $\alpha,$
where $k_\alpha \in \{k,2k,3k\}$ is determined by the squared
length of $\alpha$ \cite{DW}. This suggests that the structure and
representation theory for general parafermion vertex operator
algebra $K(\g,k)$ can be understood by using the structure and
representation theory of $K(sl_2,k_{\alpha})$(see \cite{DW} for
details).

The rationality \cite{Z},\cite{DLM2} on the semisimplicity of the
admissible module category and $C_2$-cofiniteness \cite{Z} on the
cofiniteness of certain subspace of the vertex operator algebra
are perhaps two most important concepts in the representation
theory of vertex operator algebras. It was proved in \cite{L2} and
\cite{ABD} that the rationality together with $C_2$-cofiniteness
is equivalent to the regularity \cite{DLM1} which says that any
weak module is a direct sum of irreducible ordinary modules.  Many
well known vertex operator algebras such as vertex operator
algebras associated to positive definite even lattices, integrable
highest weight modules for the affine Kac-Moody algebras, highest
weight modules associated to the minimal series for the Virasoro
algebras are regular \cite{DLM1}. It is natural to expect the
rationality and $C_2$-cofiniteness of the parafermion vertex
operator algebra $K(\g,k)$ according to the general principle in
the coset theory:
 the commutant of a regular vertex operator subalgebra in a regular
vertex operator algebra is again regular. On the surface,
$K(\g,k)$ is the commutant of the Heisenberg vertex operator
algebra which is neither rational nor $C_2$-cofinite in regular
vertex operator algebra ${\cal L}(k,0).$ But  the parafermion
vertex operator algebra $K(\g,k)$ can also be realized as the
commutant of lattice vertex operator algebra $V_{\sqrt{k}Q_L}$ in
${\cal L}(k,0)$, where $Q_L$ is the positive definite even lattice
generated by the long roots of $\g$ (for example, see
\cite{K},\cite{DL},\cite{DLY2}). There is no doubt that the
representation theory of $K(\g,k)$ would be rich and interesting.

As we have already mentioned that we hope some important
properties such as $C_2$-cofiniteness and rationality for general
parafermion vertex operator algebra $K(\g,k)$ can be obtained by
studying the simplest parafermion vertex operator algebra
associated to  $\widehat{sl_2}$. In this paper, we succeed in
proving that the $C_2$-cofiniteness for general parafermion vertex
operator algebra $K(\g,k)$ is completely determined by the
regularity of $K(sl_2,k)$. It was shown in \cite{DLY2} that
$K(sl_2,k)$ is rational and $C_2$-cofinite for $k\leq 6$, thus the
$C_2$-cofiniteness for general parafermion vertex operator algebra
$K(\g,k)$ of small level $k$ follows. The main idea in proving
this result is to use the generating result given in \cite{DW},
the property of the hermitian operator in the unitary
representation theory for the Virasoro algebra and the approach in
\cite{L2} to the maximal weak module in a completion of an
ordinary module.

Although the generators of general parafermion vertex operator
algebra $K(\g,k)$ are determined in \cite{DW}, it is far from over
to understand the structure theory for $K(\g,k)$ completely. The
main challenging is to figure out how the generators act on each
other or how the generators are related. As an experiment, we
study the simplest case $K(\g,k)$ with $k=1$ in great details. It
is clear that $K(\g,1)=\C$ if $\g$ is of $ADE$ type. So the main
discussion is put on the non-simply laced simple Lie algebras,
i.e., those of type $B_l,\; C_l,\; F_4,$ and $G_2$. A critical
observation in the analysis of $K(\g,1)$ for $\g$ being non-simply
laced simple Lie algebra is Lemma \ref{l4.2} which tells us
the relation between the generators associated to the short roots.
The result can be summarized as follows: $K(B_l,1)$ is isomorphic
to $K(A_1,2)$ which is the Virasoro vertex operator algebra
associated to the irreducible highest weight module for the
Virasoro algebra with central charge $\frac{1}{2},$ $K(C_l,1)$ is
isomorphic to $K(A_{l-1},2),$ $K(F_4,1)$ is isomorphic to
$K(A_2,2)$ and $K(G_2,1)$ is isomorphic to $K(A_1,3).$

The paper is organized as follows. In Section 2, we fix the
setting, recall the definition  of parafermion vertex operator
algebra $K(\g,k)$, and present some results on parafermion vertex
operator algebra $K(\g,k)$ from \cite{DLY1}, \cite{DLY2},
\cite{DLWY}, \cite{DW}. In Section 3, we prove the main result in
this paper. That is, the regularity of $K(sl_2,k)$ implies the
$C_2$-cofiniteness of $K(\g,k^{'})$, where $k^{'}$ depends on $k$
and the Lie algebra $\g.$ The proof involves with the positive
definite hermitian form and the nondegenerate symmetric invariant
 bilinear form on $K(\g,k^{'}).$ While the hermitian form allows us to study
certain hermitian operators on a completion of $K(\g,k^{'}),$ the
bilinear form gives us the identification of $K(\g,k^{'})$ with
its contragredient module. The semisimplity of the hermitian
operator associated to any root of $\g$ forces the maximal weak
$K(\g,k^{'})$-module in the completion of $K(\g,k^{'})$ to be
$K(\g,k^{'})$ itself. The $C_2$-cofiniteness of $K(\g,k^{'})$ is
immediate by a result in \cite{L2}.  In Section 4, we discuss the
structure of $K(\g,1)$ as a starting point to the follow-up work
on the representation theory of parafermion vertex operator
algebras.

We expect the reader to be familiar with the elementary theory of
vertex operator algebras as found, for example, in \cite{FLM} and
\cite{LL}.

\section{Parafermion vertex operator algebras $K(\g,k)$}
\label{Sect:V(k,0)}
\def\theequation{2.\arabic{equation}}
\setcounter{equation}{0}

We are working in the setting of \cite{DW}. Fix a finite
dimensional simple Lie algebra $\g$ with a Cartan subalgebra $\h.$
We use $\Delta$ and $Q$ to denote the corresponding root system
and root lattice, respectively. We also fix an invariant symmetric
nondegenerate bilinear form $\<,\>$ on $\g$ so that $\<\a,\a\>=2$
if $\alpha$ is a long root, where we have identified $\h$ with
$\h^*$ via $\<,\>.$ As in \cite{H}, for $\alpha\in \h^*$, we
denote its image in $\h$ by  $t_\alpha,$ that is,
$\alpha(h)=\<t_\alpha,h\>$ for any $h\in\h.$  Let
$\{\alpha_1,\cdots,\alpha_l\}$ be the simple roots and denote the
highest root by $\theta.$

Let $\g_{\alpha}$ denote the root space associated to the root
$\a\in \Delta.$ For $\alpha\in \Delta_+$, we fix $x_{\pm
\alpha}\in \g_{\pm \alpha}$ and
$h_{\alpha}=\frac{2}{\<\a,\a\>}t_\alpha\in \h$ such that
 $[x_\a,x_{-\a}]=h_{\a}, [h_\a,x_{\pm \a}]=\pm 2x_{\pm\a}.$ That
is, $\g^{\a}=\C x_{\a}+\C h_{\alpha}+\C x_{-\alpha}$ is isomorphic
to $sl_2$ by sending $x_\a$ to $\left(\begin{array}{ll} 0 & 1\\ 0 &
0\end{array}\right),$ $x_{-\a}$ to $\left(\begin{array}{ll} 0 & 0\\
1 & 0\end{array}\right)$ and $h_\a$ to $\left(\begin{array}{ll} 1 &
0\\ 0 & -1\end{array}\right).$ Then
$\<h_\a,h_\a\>=\frac{4}{\<\alpha,\alpha\>}$ and
$\<x_{\a},x_{-\a}\>=\frac{2}{\<\alpha,\alpha\>}$
for all
$\alpha\in \Delta.$

Let $\widehat{\mathfrak g}= \g \otimes \C[t,t^{-1}] \oplus \C K$
be the corresponding affine Lie algebra. Fix a positive integer
$k$ and let
\begin{equation*}
V(k,0) = V_{\widehat{\g}}(k,0) = Ind_{\g \otimes
\C[t]\oplus \C K}^{\widehat{\g}}\C
\end{equation*}
be the induced module where ${\g} \otimes \C[t]$ acts as $0$ and $K$ acts as $k$ on $\1=1$.
Then $V(k,0)$ is a
vertex operator algebra generated by $a(-1)\1$ for $a\in \g$ such
that
$$Y(a(-1)\1,z) = a(z)=\sum_{n \in \Z} a(n)z^{-n-1}$$
where $a(n)=a\otimes t^n$, with the
vacuum vector $\1$ and the Virasoro vector
\begin{align*}
\omega_{\mraff} &= \frac{1}{2(k+h^{\vee})} \Big(
\sum_{i=1}^{l}h_i(-1)h_i(-1)\1 +\sum_{ \alpha\in\Delta}
\frac{\<\a,\a\>}{2}x_{\alpha}(-1)x_{-\alpha}(-1)\1
\Big)
\end{align*}
of central charge $\frac{k\dim \g}{k+h^{\vee}}$ (e.g. \cite{FZ},
\cite{K}, \cite[Section 6.2]{LL}), where $h^{\vee}$ is the dual
Coxeter number of $\g$ and $\{h_i|i=1,\cdots,l\}$ is an
orthonormal basis of $\mathfrak h.$

Let $M(k)$ be the vertex operator subalgebra of $V(k,0)$
generated by $h(-1)\1$ for $h\in \mathfrak h$ with the Virasoro
element
$$\omega_{\mathfrak h} = \frac{1}{2k}
\sum_{i=1}^{l}h_i(-1)h_i(-1)\1$$
of central charge $l$, where $\{h_1,\cdots h_l\}$ is an
orthonormal basis of $\mathfrak h$ as before.

As usual we denote the component operators of $Y(u,z)$ for $u\in
V$ by $u_n$ for any vertex operator algebra $V.$ That is,
$Y(u,z)=\sum_{n\in \Z}u_nz^{-n-1}.$ In the case $V=V(k,0)$ and
$u=a(-1)\1$ for $a\in \g$, we see that $(a(-1)\1)_n=a(n).$ So in
the rest of paper, we will use both $a(n)$ and $(a(-1)\1)_n$ for
$a\in \g$ and use $u_n$ only for general $u$ without further
explanation.

We denote the unique irreducible quotient $\widehat{\g}$-module of
$V(k,0)$ by ${\mathcal{L}}(k,0).$ Then ${\mathcal{L}}(k,0)$ is a
simple, rational  vertex operator algebra. Moreover, the image of $M(k)$
in ${\mathcal{L}}(k,0)$ is isomorphic to $M(k)$ and will be denoted by $M(k)$
again. Set
\begin{equation*}
 K(\g,k)=\{v \in {\mathcal{L}}(k,0)\,|\, h(m)v =0
\text{ for }\; h\in {\mathfrak h},
 m \ge 0\}.
\end{equation*}
Then $K(\g,k)$ which is the space of highest weight vectors
with highest weight $0$ for $\wh$
is the commutant of $M(k)$ in ${\mathcal{L}}(k,0)$
and is called the parafermion vertex operator algebra associated
to the irreducible highest weight module ${\mathcal{L}}(k,0)$ for
$\widehat{\g}.$ The Virasoro element of $K(\g,k)$ is given by
$$\omega =\omega_{\mraff} - \omega_{\mathfrak h},$$
where we still use $\omega_{\mraff}, \omega_{\mathfrak h}$ to
denote their images in ${\mathcal{L}}(k,0)$.

Let $\alpha\in \Delta$ and set
$k_{\alpha}=\frac{2}{\<\alpha,\alpha\>}k.$ Note that
$\widehat{\mathfrak g^{\alpha}}$ is a subalgebra of
$\widehat{\mathfrak g}$ and ${\mathcal{L}}(k,0)$ is an integrable
$\widehat{\mathfrak g^{\alpha}}$-module of level $k_{\a}.$

For $\alpha\in \Delta$, we set
\[
\begin{split}
\omega_{\alpha} &=\frac{1}{2k_{\alpha}(k_{\alpha}+2)}( -k_{\a} h_\alpha(-2)\1
-h_\alpha(-1)^{2}\1 +2k_{\alpha}x_{\alpha}(-1)x_{-\alpha}(-1)\1)\\
&=\frac{1}{2k_{\alpha}(k_{\alpha}+2)}(-h_\alpha(-1)^{2}\1 +
k_{\alpha}x_{\alpha}(-1)x_{-\alpha}(-1)\1+k_{\alpha}x_{-\alpha}(-1)x_{\alpha}(-1)\1)
\end{split}
\]
\[
\begin{split}
W_{\alpha}^3 &= k^2_{\a} h_\alpha(-3)\1 + 3 k_{\a} h_\alpha(-2)h_\alpha(-1)\1
+
2h_\alpha(-1)^3\1 -6k_{\a} h_\alpha(-1)x_{\alpha}(-1)x_{-\alpha}(-1)\1 \\
& \quad +3 k^2_{\a}x_{\alpha}(-2)x_{-\alpha}(-1)\1 -3
k^2_{\a}x_{\alpha}(-1)x_{-\alpha}(-2)\1.
\end{split}
\]
It is easy to see that $\omega_{\a}=\omega_{-\a}$ and
$W_{-\alpha}^3=-W_{\alpha}^3$ for $\alpha\in\Delta.$ A straightforward
verification shows that
$$\omega=
\sum_{\alpha\in \Delta_+}\frac{k(k_{\alpha}+2)}{k_{\alpha}(k+h^{\vee})}\omega_{\alpha}.$$

The following result can be found in \cite{ABD}, \cite{DLY2},
\cite{DLWY} and \cite{DW}.
\begin{thm}\label{t1} (1) Vertex operator algebra $K(\g,k)$ is generated by
$\omega_{\a}, W_{\alpha}^3$ for $\alpha\in \Delta_{+}.$

(2) $K(sl_2,k)$ is a simple rational and $C_2$-cofinite for $k\leq 6.$ That is,
$K(sl_2,k)$ is regular for such $k.$

(3) Let $P_{\alpha}$ be the vertex operator subalgebra of
$K(\g,k)$ generated by $\omega_{\a}, W^3_{\alpha}.$ Then
$P_{\alpha}$ is isomorphic to $K(sl_2,k_{\alpha})$ as vertex
operator algebras.
\end{thm}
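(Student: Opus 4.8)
The plan is to treat the three parts separately, since each rests on a different circle of ideas. I would begin with part (3), which underlies part (1). For a fixed $\alpha\in\Delta_+$, the triple $\g^{\alpha}=\C x_{\alpha}+\C h_{\alpha}+\C x_{-\alpha}$ is a copy of $sl_2$, so its affinization $\widehat{\g^{\alpha}}$ embeds in $\wh$ and, as recorded in the excerpt, acts on ${\mathcal{L}}(k,0)$ integrably at level $k_{\alpha}=\frac{2}{\<\alpha,\alpha\>}k$. The currents $x_{\alpha}(z),h_{\alpha}(z),x_{-\alpha}(z)$ generate inside ${\mathcal{L}}(k,0)$ a copy of the level-$k_{\alpha}$ affine $sl_2$ vertex operator algebra, and the displayed formulas for $\omega_{\alpha}$ and $W_{\alpha}^{3}$ are precisely the formulas that define the standard generators of the rank-one parafermion algebra $K(sl_2,k_{\alpha})$. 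First I would check that these expressions agree term for term, so that $P_{\alpha}$ is exactly the image of $K(sl_2,k_{\alpha})$ under the embedding $\widehat{\g^{\alpha}}\hookrightarrow\wh$. Since $K(sl_2,k_{\alpha})$ is simple, the generator-to-generator map extends to a surjective vertex operator algebra homomorphism onto $P_{\alpha}$, and its kernel, being a proper ideal of a simple algebra, must vanish; this yields the isomorphism $P_{\alpha}\cong K(sl_2,k_{\alpha})$ of part (3).

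For part (1), recall that $K(\g,k)$ is the space of $\wh$-highest weight vectors of weight $0$ in ${\mathcal{L}}(k,0)$, that is, the commutant of the Heisenberg subalgebra $M(k)$. The strategy, following \cite{DW}, is to use that ${\mathcal{L}}(k,0)$ is generated as a vertex operator algebra by the currents $a(-1)\1$ for $a\in\g$, and to project their normally ordered products into the commutant. Decomposing $\g=\h\oplus\bigoplus_{\alpha\in\Delta}\g_{\alpha}$ and taking the $\h$-weight-zero component, one sees that the weight-$2$ and weight-$3$ parafermionic combinations attached to each root $sl_2$, namely the $\omega_{\alpha}$ and $W_{\alpha}^{3}$, together with their products, account for all of $K(\g,k)$. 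Concretely, I would reduce the statement to the assertion that $K(\g,k)$ is generated by the subalgebras $P_{\alpha}\cong K(sl_2,k_{\alpha})$ over the positive roots (the main structural output of \cite{DW}), and then to the rank-one fact that $K(sl_2,m)$ is generated by its own $\omega$ and $W^{3}$.

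For part (2), the regularity of $K(sl_2,k)$ with $k\le 6$, I would invoke the explicit case-by-case analysis of \cite{DLY2}, in which each such $K(sl_2,k)$ is identified with a vertex operator algebra already known to be regular: for the smallest values these are familiar rational $C_2$-cofinite models (for instance $K(sl_2,1)=\C$ and $K(sl_2,2)$ the central charge $\tfrac12$ Virasoro vertex operator algebra), and for the remaining $k\le 6$ one establishes rationality and $C_2$-cofiniteness directly from the $\Z_k$-parafermion structure. Rationality together with $C_2$-cofiniteness then gives regularity by \cite{L2} and \cite{ABD}.

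The main obstacle is part (2): the regularity of $K(sl_2,k)$ is by far the deepest ingredient and cannot be produced by a short structural argument, as it depends on the detailed analysis of the module category carried out in \cite{DLY2}. Part (1) is also substantial, since verifying that the $\omega_{\alpha}$ and $W_{\alpha}^{3}$ generate the \emph{entire} commutant requires the commutant machinery of \cite{DW}; by comparison, part (3) is a routine matching of explicit formulas followed by an appeal to the simplicity of $K(sl_2,k_{\alpha})$.
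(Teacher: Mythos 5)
Your proposal is correct and follows essentially the same route as the paper: the paper offers no proof of this theorem at all, merely citing \cite{ABD}, \cite{DLY2}, \cite{DLWY} and \cite{DW}, and your sketch ultimately rests the load-bearing steps (rank-one generation and regularity for $k\le 6$, the commutant machinery for general $\g$, and the identification $P_{\alpha}\cong K(sl_2,k_{\alpha})$ via integrability of ${\mathcal L}(k,0)$ as a $\widehat{\g^{\alpha}}$-module) on those same references. The only caveat is cosmetic: in part (3) the existence of the homomorphism $K(sl_2,k_{\alpha})\to P_{\alpha}$ comes from integrability (which forces the current subalgebra to be the simple affine vertex operator algebra), not from simplicity of $K(sl_2,k_{\alpha})$, which only serves afterward to ensure injectivity.
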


\section{$C_2$-cofiniteness of $K(\g,k)$}
\def\theequation{3.\arabic{equation}}
\setcounter{equation}{0}

We need to recall some definitions from \cite{Z}, \cite{DLM1},
\cite{DLM2}. A vertex operator algebra $V$ is called
$C_2$-cofinite if $\dim V/C_2(V)<\infty$, where $C_2(V)$ is a
subspace of $V$ spanned by $u_{-2}v$ for $u,v\in V.$ $V$ is called
regular if any weak module is a direct sum of irreducible ordinary
modules. $V$ is called rational if any admissible module is
completely reducible. It was proved in \cite{L2} and \cite{ABD}
that the rationality together with $C_2$-cofiniteness is
equivalent to the regularity. The most important property of
$C_2$-cofinite vertex operator algebra is that such vertex
operator algebra is finitely generated and has a PBW type spanning
set \cite{GN}. Furthermore, $C_2$-cofinite vertex operator algebra
has only finitely many irreducible admissible modules up to
isomorphism and each irreducible admissible module is ordinary
\cite{L2}, \cite{GN}.

In this section, we prove the main theorem of this paper.
\begin{thm}\label{t2} Let $q$ be a positive integer. If $K(sl_2,k)$ is rational and $C_2$-cofinite for $k\leq q,$ then
$K(\g,k)$ is $C_2$-cofinite if $\g$ is ADE type and $k\leq q,$
$\g$ is type $G_2$ and $k\leq [q/3],$ and $\g$ is other type and
$k\leq [q/2]$, where $[r]$ denotes the maximal integer less than
or equal to $r$ for any real number $r$.
\end{thm}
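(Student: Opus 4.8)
The plan is to reduce everything to Theorem~\ref{t1}, which presents $K(\g,k)$ as generated by the subalgebras $P_\a\cong K(sl_2,k_\a)$, $\a\in\Delta_+$; under the hypotheses each $P_\a$ will be regular, and the problem becomes that of controlling weak modules over an algebra generated by regular subalgebras, which I would handle through the completion criterion of \cite{L2}. First I would dispatch the level bookkeeping: since $k_\a=\frac{2}{\<\a,\a\>}k$, a long root gives $k_\a=k$, while in the chosen normalization a short root gives $k_\a=2k$ for $B_l,C_l,F_4$ (where $\<\a,\a\>=1$) and $k_\a=3k$ for $G_2$ (where $\<\a,\a\>=\tfrac{2}{3}$). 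The three cases of the statement are precisely the conditions $\max_{\a\in\Delta_+}k_\a\le q$, namely $k\le q$ in the $ADE$ case (all roots long), $k\le[q/2]$ for $B_l,C_l,F_4$, and $k\le[q/3]$ for $G_2$. Thus $k_\a\le q$ for all $\a$, so by hypothesis each $K(sl_2,k_\a)$ is rational and $C_2$-cofinite, hence regular by \cite{L2},\cite{ABD}, and therefore each $P_\a$ is regular.

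Next I would install the two forms. Because $\mathcal{L}(k,0)$ is unitary, $K(\g,k)$ inherits a positive-definite Hermitian form with respect to which the degree-zero mode $L^\a(0):=(\om_\a)_1$ of each Virasoro vector $\om_\a$ is self-adjoint, hence semisimple with real spectrum. Simultaneously $K(\g,k)$ carries a nondegenerate symmetric invariant bilinear form, which identifies $K(\g,k)$ with its contragredient module and thereby realizes the graded completion $\prod_n K(\g,k)_n$ as the full dual of $K(\g,k)$. By the criterion of \cite{L2} it then suffices to prove that the maximal weak $K(\g,k)$-submodule $M$ of this completion is $K(\g,k)$ itself.

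For the core step, take $w\in M$ and restrict the module structure to a fixed $P_\a$. Since $P_\a$ is regular, the weak $P_\a$-submodule generated by $w$ is a direct sum of ordinary irreducible $P_\a$-modules, so $L^\a(0)$ acts on it semisimply with spectrum in the discrete, bounded-below set of conformal weights of $K(sl_2,k_\a)$-modules. Running this over all $\a$, and using that $K(\g,k)$ is generated by the $P_\a$ together with the relation $\om=\sum_{\a\in\Delta_+}c_\a\om_\a$ with every $c_\a=\frac{k(k_\a+2)}{k_\a(k+h^\vee)}>0$, so that the grading operator $L(0)=\sum_\a c_\a L^\a(0)$ is a positive combination of these self-adjoint operators, I would argue that $w$ can be supported in only finitely many graded pieces, i.e. $w\in K(\g,k)$. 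This gives $M=K(\g,k)$, and \cite{L2} then delivers the $C_2$-cofiniteness.

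The main obstacle is exactly this last passage. Semisimplicity and discreteness of the spectrum of each $L^\a(0)$ do not by themselves exclude an element of the completion from being an infinite sum of eigenvectors, and the operators $L^\a(0)$ for different roots need not commute, so one cannot simply diagonalize them simultaneously. What the proof must extract, from the positive-definiteness of the Hermitian form together with the lower-truncation axiom for the weak module $M$ and the regularity of each $P_\a$, is a genuine uniform bound on the degrees that a weak-module element can occupy; making such a bound survive the coordination among all the roots $\a$ is the delicate heart of the argument.
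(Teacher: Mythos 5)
Your setup coincides with the paper's: the level bookkeeping ($k_\a\le q$ for all $\a$, hence each $P_\a\cong K(sl_2,k_\a)$ is regular), the positive definite Hermitian form making each $L_\a(0)$ self-adjoint, the identification of $K(\g,k)$ with its contragredient via the invariant bilinear form, and the reduction via \cite{L2} to showing the maximal weak module $\mathcal{D}(V)$ inside the completion equals $V=K(\g,k)$. But the proposal stops exactly where the proof has to happen: you state the needed conclusion (an element of $\mathcal{D}(V)$ has only finitely many nonzero homogeneous components) and then, in your last paragraph, concede you cannot derive it, citing the non-commutativity of the $L_\a(0)$ and the possibility of infinite sums of eigenvectors. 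That is a genuine gap, not a technical omission; the entire content of the theorem lives in that step.

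The missing idea is to control the operators $L_\a(0)$ not globally, and not simultaneously, but along a single putative bad element, and only degreewise. Suppose $v=(v^0,v^1,\dots)\in\mathcal{D}(V)$ has infinitely many nonzero components $v^i\in V_i$. Fix $\a\in\Delta_+$. Since the weak $P_\a$-module $\mathcal{D}(V)$ is a direct sum of irreducible ordinary $P_\a$-modules, the \emph{single} vector $v$ lies in a finite sub-sum, hence is a \emph{finite} sum of $L_\a(0)$-eigenvectors with eigenvalues $\lambda^\a_1,\dots,\lambda^\a_{j_\a}$; let $\lambda_\a$ be their maximum. Because $L_\a(0)$ acts on $\overline{V}=\prod_i V_i$ componentwise, preserving each $V_i$, projecting this finite eigenvector decomposition to $V_i$ shows that \emph{every} component $v^i$ is a sum of $L_\a(0)$-eigenvectors with eigenvalues drawn from the same finite list --- this is the uniform-in-$i$ bound you were looking for, and it needs no joint diagonalization of the various $L_\a(0)$. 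The Hermitian form then enters only on each finite-dimensional $V_i$: by Lemma \ref{l1}, $L_\a(0)$ is Hermitian there, so eigenvectors with distinct eigenvalues are orthogonal and $(L_\a(0)v^i,v^i)\le\lambda_\a(v^i,v^i)$. Summing over $\a$ with the positive coefficients $c_\a$ in $L(0)=\sum_{\a\in\Delta_+}c_\a L_\a(0)$ gives
\begin{equation*}
i(v^i,v^i)=(L(0)v^i,v^i)\le\Big(\sum_{\a\in\Delta_+}c_\a\lambda_\a\Big)(v^i,v^i),
\end{equation*}
so every nonzero $v^i$ has degree $i$ bounded by the fixed constant $\sum_\a c_\a\lambda_\a$, contradicting that infinitely many are nonzero. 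Hence $\mathcal{D}(V)=V$ and \cite{L2} gives $C_2$-cofiniteness. Note also that the generation statement of Theorem \ref{t1}(1), which you invoke, is not actually needed; what is needed is only the decomposition $\omega=\sum_{\a\in\Delta_+}c_\a\omega_\a$ with all $c_\a>0$.
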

So the $C_2$-cofiniteness for general parafermion vertex operator algebra
$K(\g,k)$ follows from the regularity of vertex operator algebra
$K(sl_2,k)$ for all $k.$ It is expected that the rationality of general parafermion
vertex operator algebra also follows the regularity of $K(sl_2,k)$ for all
$k.$ So the study of simplest parafermion vertex operator algebras
$K(sl_2,k)$ is crucial in understanding the general parafermion vertex operator algebras.

Combining Theorem \ref{t1}, Theorem \ref{t2} and \cite[Proposition
5.7]{ABD}, we immediately have
\begin{cor} $K(\g,k)$ is $C_2$-cofinite
 if $\g$ is ADE type and $k\leq 6,$
$\g$ is type $G_2$ and $k\leq 2,$ and $\g$ is other type and
$k\leq 3.$ In particular, there are only finitely many irreducible
modules up to isomorphism and each irreducible weak module is
ordinary for such vertex operator algebra.
\end{cor}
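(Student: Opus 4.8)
The final statement is an immediate specialization of Theorem~\ref{t2}: taking $q=6$, which is legitimate because Theorem~\ref{t1}(2) guarantees that $K(sl_2,k)$ is rational and $C_2$-cofinite for $k\le 6$, the three ranges $k\le 6$, $k\le[6/3]=2$ and $k\le[6/2]=3$ are exactly those in the Corollary; the concluding ``In particular'' assertion is then the standard finiteness package for $C_2$-cofinite vertex operator algebras (\cite{L2}, \cite{GN}). So the real work is Theorem~\ref{t2}, whose proof I sketch. The first move is to convert the numerical hypotheses into a single uniform statement. By Theorem~\ref{t1}(3), $P_\a\cong K(sl_2,k_\a)$ with $k_\a=\frac{2}{\la\a,\a\ra}k$. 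For $ADE$ all roots are long, so $\la\a,\a\ra=2$, $k_\a=k$, and $k\le q$ gives $k_\a\le q$; for $B_l,C_l,F_4$ the short roots have $\la\a,\a\ra=1$, so $\max_\a k_\a=2k$, and $k\le[q/2]$ gives $k_\a\le q$; for $G_2$ the short roots give $k_\a=3k$, and $k\le[q/3]$ gives $k_\a\le q$. Thus in every case each $P_\a$ ($\a\in\Delta_+$) is rational and $C_2$-cofinite, hence regular.

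Next I would equip $K(\g,k)$ with its two natural forms. Because $\mathcal{L}(k,0)$ is a unitary integrable $\widehat\g$-module, $K(\g,k)$ inherits a positive-definite hermitian form for which the grading-type operators are self-adjoint; and as a simple vertex operator algebra with $K(\g,k)_0=\C\1$ it carries a nondegenerate symmetric invariant bilinear form, identifying $K(\g,k)$ with its contragredient. Following \cite{L2}, I would then form the completion $\overline{K(\g,k)}=\prod_{n\ge 0}K(\g,k)_n$ and consider the maximal weak $K(\g,k)$-submodule $W$ of $\overline{K(\g,k)}$. The criterion of \cite{L2} reduces the $C_2$-cofiniteness of $K(\g,k)$ to the equality $W=K(\g,k)$; equivalently, one must show that $W$ contains no vector with an infinite homogeneous tail.

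The heart of the matter is a semisimplicity input supplied one root at a time. Fix $\a\in\Delta_+$. Since $P_\a$ is regular, $\overline{K(\g,k)}$ decomposes as a $P_\a$-module into irreducible ordinary $P_\a$-modules, on each of which $L^\a(0):=(\omega_\a)_1$ acts with real, discrete, bounded-below spectrum; and since $\omega_\a$ is real with respect to the hermitian structure, $L^\a(0)$ is a hermitian operator on $\overline{K(\g,k)}$. Hence each $L^\a(0)$ is diagonalizable. Using that by Theorem~\ref{t1}(1) the $\omega_\a,W_\a^3$ ($\a\in\Delta_+$) generate $K(\g,k)$ and that $\omega=\sum_{\a\in\Delta_+}\frac{k(k_\a+2)}{k_\a(k+h^\vee)}\omega_\a$, I would show that any vector of $W$ is jointly controlled by the family $\{L^\a(0)\}$; the diagonalizability, combined with the positive-definiteness of the hermitian form, then forces every element of $W$ to be a finite sum, so $W=K(\g,k)$ and $C_2$-cofiniteness follows.

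The principal difficulty lies in this last step: upgrading the per-root semisimplicity of each $L^\a(0)$, which only sees the $P_\a$-module structure, into a global statement that pins $W$ down to $K(\g,k)$ exactly. One must show that the hermitian operators attached to all the roots, together with the fact that the roots span $\h^*$, leave no room for an infinite tail, which requires carefully coupling the different $P_\a$-actions and reconciling each $L^\a(0)$-grading with the ambient $L(0)$-grading on $\overline{K(\g,k)}$. Once $W=K(\g,k)$ is established for all admissible levels, specializing $q=6$ yields the Corollary, and the finiteness of the set of irreducible modules together with the ordinariness of each irreducible weak module follows from \cite{L2} and \cite[Proposition 5.7]{ABD}.
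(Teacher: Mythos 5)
Your first paragraph is precisely the paper's proof of the Corollary: the paper derives it by combining Theorem \ref{t1}(2), Theorem \ref{t2} with $q=6$, and \cite[Proposition 5.7]{ABD}, and your specialization $[6/3]=2$, $[6/2]=3$ is exactly right. So as a proof of the stated Corollary, using Theorem \ref{t2} (proved earlier in the paper) as a black box, your proposal is correct and follows the same route.

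Since you volunteered a sketch of Theorem \ref{t2} as ``the real work,'' two comments on it. First, a small slip: regularity of $P_\a$ applies to \emph{weak} modules, and the full completion $\overline{K(\g,k)}$ is not a weak $P_\a$-module; what decomposes into irreducible ordinary $P_\a$-modules is ${\mathcal D}(K(\g,k))$, the maximal weak submodule. Second, and more substantively, the step you flag as ``the principal difficulty'' and leave open is exactly where the paper's argument is short and clean, and it requires neither ``coupling the different $P_\a$-actions'' nor the fact that the roots span $\h^*$. The mechanism is: a putative $v=(v^0,v^1,\dots)\in{\mathcal D}(V)\setminus V$ lies in a \emph{finite} sum of irreducible ordinary $P_\a$-modules, so for each fixed $\a$ only finitely many $L_\a(0)$-eigenvalues occur in $v$; let $\lambda_\a$ be the largest. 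Since $L_\a(0)$ preserves each graded piece $V_i$ and is hermitian for the positive definite form (Lemma \ref{l1}), eigenvectors with distinct eigenvalues are orthogonal, giving the Rayleigh-type bound $(L_\a(0)v^i,v^i)\le\lambda_\a(v^i,v^i)$ for every $i$. Now the decomposition $L(0)=\sum_{\a\in\Delta_+}c_\a L_\a(0)$ with all $c_\a>0$ yields
$$
i\,(v^i,v^i)=(L(0)v^i,v^i)=\sum_{\a\in\Delta_+}c_\a(L_\a(0)v^i,v^i)\le\Big(\sum_{\a\in\Delta_+}c_\a\lambda_\a\Big)(v^i,v^i),
$$
so every index $i$ with $v^i\ne 0$ is bounded by the fixed constant $\sum_{\a\in\Delta_+}c_\a\lambda_\a$, contradicting the infinite tail. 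This one inequality is the entire ``global'' step: the per-root information enters only through the finite eigenvalue sets and the positivity of the coefficients $c_\a$, with no further reconciliation of gradings needed.
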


In order to prove the theorem, we need some facts on the invariant
bilinear form on $K(\g,k)$ and some related results from
\cite{L1}-\cite{L2}. Note that $K(\g,k)=\oplus_{n\geq 0}K(\g,k)_n$
with $K(\g,k)_0=\C \1$ and $K(\g,k)_1=0.$ It follows from
\cite{L1} that there is a unique nondegenerate symmetric bilinear
form \cite{FHL} $\<,\>$ on $K(\g,k)$ such that
$$\<\1,\1\>=1$$
$$\<Y(u,z)v,w\>=\<v,Y(e^{zL(1)}(-z^{-2})^{L(0)}u,z^{-1})w\>$$
for $u,v,w\in K(\g,k)$, where $L(n)$ are the component operator of
$Y(\omega,z)=\sum_{n\in\Z}L(n)z^{-n-2}.$ As a result, $K(\g,k)$ is
isomorphic to its contragredient module
$K(\g,k)'=\sum_{n\in\Z}K(\g,k)_n^*$, where $K(\g,k)_n^*$ is the
dual space of $K(\g,k)_n$ \cite{FHL}. Denote by
$\overline{K(\g,k)}$ the direct product of $K(\g,k)_n$ for
$n\in\Z.$ Each vector in $\overline{K(\g,k)}$ can be written as
$$(v^0,v^1,...,v^n,...),$$
where $v^i\in K(\g,k)_i.$ Then $\overline {K(\g,k)}$ can be
identified with $K(\g,k)^*$ naturally by using the bilinear form
$\<,\>.$

Recall from \cite{B} the Lie algebra
$\widehat{K(\g,k)}=K(\g,k)\otimes \C[t^{\pm 1}]/D(K(\g,k)\otimes
\C[t^{\pm 1}])$, where $D=L(-1)\otimes 1+1\otimes t\frac{d}{dt}.$
Let $a_{(n)}$ be the image of $a\otimes t^n$ in
$\widehat{K(\g,k)}$ for $a\in K(\g,k)$ and $n\in \Z.$ The Lie
bracket in $\widehat{K(\g,k)}$ is defined as follows:
$$[a_{(m)},b_{(n)}]=\sum_{i=0}^{{\wt} a-1}(a_ib)_{(m+n-i)}$$
for homogeneous $a,b\in K(\g,k)$ and $m,n\in\Z.$ Then $K(\g,k)$ is
a $\widehat{K(\g,k)}$-module such that $a_{(n)}$ acts as $a_n$ and
extend the action to  $\overline {K(\g,k)}$ in an obvious way to
make $\overline {K(\g,k)}$ a $\widehat{K(\g,k)}$-module
\cite{FHL}, \cite{L2}. Let ${\mathcal{D}}(K(\g,k))$ be the
subspace of  $\overline{K(\g,k)}$ consisting of vectors $u$ such
that $a_{(n)}u=0$ for $a\in  K(\g,k)$ and $n$ sufficiently large.
Then ${\mathcal{D}}(K(\g,k))$ is a weak $K(\g,k)$-module and
$K(\g,k)$ is a submodule of ${\mathcal{D}}(K(\g,k))$ \cite{L2}.

We also need to use a positive definite hermitian form on
$K(\g,k)$ in the proof of theorem. It is well known that
${\mathcal{L}}(k,0)$ is a unitary representation for the affine
Lie algebra $\widehat{\g}$ \cite{K}. In fact, there is a unique
positive definite hermitian form $(,)$ on ${\mathcal{L}}(k,0)$
such that
$$(\1,\1)=1,$$
$$(h_{\alpha}(m)u,v)=(u,h_{\alpha}(-m)v),$$
$$(x_{\alpha}(m)u,v)=(u,x_{-\alpha}(-m)v)$$
for $\alpha\in \Delta$ and $u,v\in {\mathcal{L}}(k,0), m\in\Z$.
Clearly, the restriction of  $(,)$ to $K(\g,k)$ defines a positive
definite hermitian form on  $K(\g,k).$

Set
$$Y(\omega_{\alpha},z)=\sum_{n\in\Z}L_{\alpha}(n)z^{-n-2}$$
for $\alpha\in \Delta_+$ and let $Vir_{\alpha}$ be the Virasoro algebra
generated by the component operators of $Y(\omega_{\alpha},z).$ Then one can
compute that
\[
\begin{split}
L_{\alpha}(n)&=\frac{1}{2k_{\alpha}(k_{\alpha}+2)}
\sum_{s+t=n}\left(-\mbox{$\circ\atop\circ$}h_\alpha(s)h_{\alpha}(t)\mbox{$\circ\atop\circ$}
+k_{\alpha}\mbox{$\circ\atop\circ$}x_{\alpha}(s)x_{-\alpha}(t)\mbox{$\circ\atop\circ$}
+k_{\alpha}\mbox{$\circ\atop\circ$}x_{-\alpha}(s)x_{\alpha}(t)\mbox{$\circ\atop\circ$}\right)
\end{split}
\]
for $n\in \Z$, where
\[
\mbox{$\circ\atop\circ$}u(s)v(t)\mbox{$\circ\atop\circ$}=
\left\{\begin{array}{ll}
u(s)v(t) & {\rm if}\ \ s<0\\
v(t)u(s)&{\rm if}\ \ s\ge 0
\end{array}\right.
\]
for $u,v\in \g$ and $s,t\in\Z.$

\begin{lem}\label{l1} ${\mathcal{L}}(k,0)$ is a unitary representation for
the Virasoro algebra $Vir_{\alpha}$ for all $\alpha\in \Delta_+.$
That is,
$$(L_{\alpha}(m)u,v)=(u,L_{\alpha}(-m)v)$$
for all $m\in\Z.$ In particular, $L_{\alpha}(0)$ is a hermitian
operator on ${\mathcal{L}}(k,0)_n$ for all $n\in \Z,$ and
$K(\g,k)$ is a unitary representation of $Vir_{\alpha}$ for all
$\alpha\in \Delta_+.$
\end{lem}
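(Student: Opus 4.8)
The plan is to verify the unitarity relation $(L_{\alpha}(m)u,v)=(u,L_{\alpha}(-m)v)$ directly from the explicit mode expansion of $L_{\alpha}(n)$ together with the three defining adjointness properties of the hermitian form $(,)$ on $\mathcal{L}(k,0)$, namely $(h_{\alpha}(m)u,v)=(u,h_{\alpha}(-m)v)$ and $(x_{\pm\alpha}(m)u,v)=(u,x_{\mp\alpha}(-m)v)$. First I would fix $\alpha\in\Delta_+$ and work inside the copy of $\widehat{sl_2}$ given by $\widehat{\g^{\alpha}}$, since $L_{\alpha}(n)$ is built entirely from $h_{\alpha}(s)$, $x_{\alpha}(s)$ and $x_{-\alpha}(s)$; this reduces the whole computation to the rank-one affine algebra and the restriction of $(,)$ to it.

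The heart of the argument is to compute the hermitian adjoint of each of the three kinds of summand appearing in $L_{\alpha}(n)$. The key observation is that the given adjointness rules say precisely that $h_{\alpha}(s)^{\dagger}=h_{\alpha}(-s)$, $x_{\alpha}(s)^{\dagger}=x_{-\alpha}(-s)$ and $x_{-\alpha}(s)^{\dagger}=x_{\alpha}(-s)$, so that under $\dagger$ the three bilinear terms $h_{\alpha}(s)h_{\alpha}(t)$, $x_{\alpha}(s)x_{-\alpha}(t)$ and $x_{-\alpha}(s)x_{\alpha}(t)$ are sent (after reversing the operator order, as adjoints always do) to $h_{\alpha}(-t)h_{\alpha}(-s)$, $x_{\alpha}(-t)x_{-\alpha}(-s)$ and $x_{-\alpha}(-t)x_{\alpha}(-s)$ respectively. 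Summing over all $s+t=n$ is the same as summing over $s+t=-n$ after relabelling $(s,t)\mapsto(-t,-s)$, so each unordered family of products is carried onto the corresponding family in $L_{\alpha}(-n)$. Thus the formal adjoint of the naive (un-normal-ordered) mode expression for $L_{\alpha}(n)$ is exactly the naive expression for $L_{\alpha}(-n)$, and the overall scalar $\frac{1}{2k_{\alpha}(k_{\alpha}+2)}$ is real and hence unchanged.

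The step I expect to be the main obstacle is the normal-ordering $\mbox{$\circ\atop\circ$}\,\cdot\,\mbox{$\circ\atop\circ$}$: taking adjoints reverses operator order and therefore can move a creation operator past an annihilation operator, potentially altering the normal ordering and producing commutator correction terms. I would handle this by checking that the difference between the normal-ordered and the naively-ordered expressions is, in each graded piece $\mathcal{L}(k,0)_n$, a scalar (arising from the finitely many commutators $[h_{\alpha}(s),h_{\alpha}(-s)]$, $[x_{\alpha}(s),x_{-\alpha}(-s)]$ which contribute multiples of the central charge $K=k$, plus a finite term in $h_{\alpha}(0)$). Since these correction scalars enter $L_{\alpha}(n)$ only for $n=0$ and are manifestly real, the adjoint of $L_{\alpha}(0)$ remains $L_{\alpha}(0)$; for $n\neq 0$ no such central correction can occur because the total mode index is nonzero. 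This gives the unitarity identity on all of $\mathcal{L}(k,0)$.

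Finally I would read off the stated consequences. Taking $m=0$ shows $L_{\alpha}(0)$ is self-adjoint, hence a hermitian operator on each finite-dimensional graded space $\mathcal{L}(k,0)_n$; and since $K(\g,k)$ is a subspace of $\mathcal{L}(k,0)$ on which the restricted form $(,)$ is still positive definite and which is preserved by every $L_{\alpha}(m)$ (because $\omega_{\alpha}\in K(\g,k)$ and $K(\g,k)$ is a vertex operator subalgebra), the same identity holds on $K(\g,k)$, so $K(\g,k)$ is a unitary $Vir_{\alpha}$-module for every $\alpha\in\Delta_+$.
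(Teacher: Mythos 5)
Your overall strategy---taking adjoints term by term via $h_{\alpha}(s)\mapsto h_{\alpha}(-s)$, $x_{\pm\alpha}(s)\mapsto x_{\mp\alpha}(-s)$ and then reindexing $(s,t)\mapsto(-t,-s)$ so that $\sum_{s+t=n}$ becomes $\sum_{s+t=-n}$---is exactly the paper's, and you correctly identify normal ordering as the crux. But your proposed resolution of that crux fails. The ``naively-ordered'' expression you want to compare against is not a well-defined operator: applied to a fixed vector $u$, the sum $\sum_{s+t=n}x_{\alpha}(s)x_{-\alpha}(t)u$ has infinitely many nonzero terms, since for $s\gg 0$ one has $x_{\alpha}(s)u=0$ and hence $x_{\alpha}(s)x_{-\alpha}(t)u=[x_{\alpha}(s),x_{-\alpha}(t)]u=h_{\alpha}(n)u+s\,\delta_{n,0}\langle x_{\alpha},x_{-\alpha}\rangle k\,u$, which does not tend to zero. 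This same formula shows that your key claim---that reordering produces only central corrections and only when $n=0$---is false: $[x_{\alpha}(s),x_{-\alpha}(t)]$ has the \emph{non-central} part $h_{\alpha}(s+t)$ for every pair with $s+t=n$, whatever $n$ is; and at $n=0$ the central corrections form the divergent series $\sum_{s\ge 0}s\cdot(\mathrm{const})$ (the combined per-$s$ correction is $\tfrac{4ks}{\langle\alpha,\alpha\rangle}(k_{\alpha}-1)u$), not a ``finite real scalar arising from finitely many commutators.'' So the step ``normal-ordered $=$ naive up to real scalars'' collapses, and it collapses worst at $n=0$, which is precisely the case the lemma needs for hermiticity of $L_{\alpha}(0)$.

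The repair---and this is what the paper's proof does---is to never de-normal-order. Take the adjoint of each normal-ordered summand directly: for $s<0$ the summand is $u(s)v(t)$ and its adjoint $v^{\dagger}(-t)u^{\dagger}(-s)$ is automatically normal-ordered (the index $-s>0$ acts first), and likewise for $s>0$; the only mismatch occurs at the boundary $s=0$, where the adjoint places $h_{\alpha}(0)$, $x_{\pm\alpha}(0)$ on the wrong side of the creation operator. There the error terms cancel \emph{between the two cross families}:
\begin{equation*}
k_{\alpha}[x_{\alpha}(0),x_{-\alpha}(-t)]+k_{\alpha}[x_{-\alpha}(0),x_{\alpha}(-t)]
=k_{\alpha}h_{\alpha}(-t)-k_{\alpha}h_{\alpha}(-t)=0,
\end{equation*}
together with $[h_{\alpha}(0),h_{\alpha}(-t)]=0$. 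This cancellation is exactly what justifies the silent regrouping of the $s=0$ term (from the ``$s\ge 0$'' sum into the ``$s\le 0$'' sum) in the paper's chain of equalities; it is this pairing of $x_{\alpha}x_{-\alpha}$ against $x_{-\alpha}x_{\alpha}$, not any finiteness of central terms, that makes the identity $(L_{\alpha}(n)u,v)=(u,L_{\alpha}(-n)v)$ come out. Your opening reduction to $\widehat{\g^{\alpha}}$ and your closing deductions (hermiticity of $L_{\alpha}(0)$ on each graded piece, restriction to $K(\g,k)$ since $\omega_{\alpha}\in K(\g,k)$) are fine once the identity on ${\mathcal{L}}(k,0)$ is established, but the middle of your argument must be rebuilt as above.
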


\begin{proof}
 Let $u,v\in {\mathcal{L}}(k,0)$ and $n\in\Z.$ Then
\[
\begin{split}
 & 2k_{\alpha}(k_{\alpha}+2)(L_{\alpha}(n)u,v)\\
&\ \  =\sum_{s+t=n,s<0}(-h_\alpha(s)h_{\alpha}(t)u+k_{\alpha}x_{\alpha}(s)x_{-\alpha}(t)u+k_{\alpha}x_{-\alpha}(s)x_{\alpha}(t)u,v)\\
& \ \ \ \ +\sum_{s+t=n,s\geq 0}(-h_\alpha(t)h_{\alpha}(s)u+k_{\alpha}x_{-\alpha}(t)x_{\alpha}(s)u+k_{\alpha}x_{\alpha}(t)x_{-\alpha}(s)u,v)\\
&\ \  =\sum_{s+t=n,s<0}(u, -h_\alpha(-t)h_{\alpha}(-s)v+k_{\alpha}x_{-\alpha}(-t)x_{\alpha}(-s)v+k_{\alpha}x_{\alpha}(-t)x_{-\alpha}(-s)v)\\
& \ \ \ \ +\sum_{s+t=n,s\geq 0}(u,-h_\alpha(-s)h_{\alpha}(-t)v+k_{\alpha}x_{\alpha}(-s)x_{-\alpha}(-t)v+k_{\alpha}x_{-\alpha}(-s)x_{\alpha}(-t)v)\\
&\ \  =\sum_{s+t=n,s\leq 0}(u, -h_\alpha(-t)h_{\alpha}(-s)v+k_{\alpha}x_{-\alpha}(-t)x_{\alpha}(-s)v+k_{\alpha}x_{\alpha}(-t)x_{-\alpha}(-s)v)\\
& \ \ \ \ +\sum_{s+t=n,s>0}(u,-h_\alpha(-s)h_{\alpha}(-t)v+k_{\alpha}x_{\alpha}(-s)x_{-\alpha}(-t)v+k_{\alpha}x_{-\alpha}(-s)x_{\alpha}(-t)v)\\
& \ \ =\sum_{s+t=-n}(u,-\mbox{$\circ\atop\circ$}h_\alpha(s)h_{\alpha}(t)\mbox{$\circ\atop\circ$}v+k_{\alpha}\mbox{$\circ\atop\circ$}x_{\alpha}(s)x_{-\alpha}(t)\mbox{$\circ\atop\circ$}v
+k_{\alpha}\mbox{$\circ\atop\circ$}x_{-\alpha}(s)x_{\alpha}(t)\mbox{$\circ\atop\circ$}v)\\
& \ \ = 2k_{\alpha}(k_{\alpha}+2)(u,L_{\alpha}(-n)v),
\end{split}
\]
as desired.
\end{proof}

We are now proving Theorem \ref{t2}.
\begin{proof} For short, we set $V=K(\g,k).$ It
follows from \cite{L2} that if ${\mathcal{D}}(V)=V$, then $V$ is
$C_2$-cofinite. Assume ${\mathcal{D}}(V)\not =V.$ Then
there exists $v=(v^0,...,v^n,...)\in
{\mathcal{D}}(V)$ not in $V$, where $v^i\in V_{i}=K(\g,k)_{i}.$
That is, there are infinitely many nonzero $v^i.$ Recall that
$k_{\alpha}=\frac{2}{\<\alpha,\alpha\>}k.$ So
$k=\frac{\<\alpha,\alpha\>}{2}k_{\alpha}$. Assume that
$k_{\alpha}\leq q$ for all $\alpha\in \Delta_+,$ then $P_{\a}$ is
regular for all $\alpha\in \Delta_+$ from the assumption. Thus the
weak $P_{\alpha}$-module ${\mathcal{D}}(V)$ is a a direct sum of
irreducible ordinary $P_{\alpha}$-modules.

It is easy to see that if  $\g$ is $ADE$ type, then $k \leq q,$ if
$\g$ is type $G_2$, then $k\leq [q/3],$ and if $\g$ is the other
type, $k\leq [q/2].$ Since
 ${\mathcal{D}}(V)$ is a a direct sum of irreducible ordinary
$P_{\alpha}$-modules, each $L_{\alpha}(0)$ is semisimple on
${\mathcal{D}}(V).$  This implies that $v$ is a sum of
eigenvectors for $L_{\a}(0)$ with rational eigenvalues
$\lambda^{\a}_1,...,\lambda^{\alpha}_{j_{\alpha}}$ \cite{DLM3}.
 Since $L_{\a}(0)$ preserves each $V_i$
for all $i$, we see that each $v^i$ is a sum of eigenvectors for
$L_{\a}(0)$ with possible eigenvalues
$\lambda^{\a}_1,...,\lambda^{\alpha}_{j_{\alpha}}.$
 Let $\lambda_{\alpha}$ be the maximum of $\lambda^{\alpha}_j$ for
$1\leq j\leq j_{\alpha}.$

Recall that $L(0)=\sum_{\alpha\in\Delta_+}c_{\alpha}L_{\a}(0)$,
where $c_{\a}=\frac{k(k_{\alpha}+2)}{k_{\alpha}(k+h^{\vee})}$ is
positive. It is clear that $(L(0)v^i,v^i)=i(v^i,v^i).$ By Lemma
\ref{l1}, each $L_{\alpha}(0)$ is a hermitian operator on $V_i$
and eigenvectors with different eigenvalues are orthogonal with
respect to the positive definite hermitian form $(,).$ As a
result, $(L_{\alpha}(0)v^i,v^i)\leq \lambda_{\alpha}(v^i,v^i)$ for
all $i.$  This yields
\[
i(v^i,v^i)=(L(0)v^i, v^i)=\sum_{\alpha\in\Delta_+}c_{\alpha}(L_{\a}(0)v^i,v^i)
\leq \sum_{\alpha\in\Delta_+}c_{\alpha}\lambda_{\alpha}(v^i,v^i).
\]
for all $i.$ Since there are infinitely many nonzero $v^i$, we
conclude that there are infinitely many positive integers $i$ less
than or equal to a fixed number
$\sum_{\alpha\in\Delta_+}c_{\alpha}\lambda_{\alpha}.$ This is
obviously a contradiction. The proof is complete.
\end{proof}

\section{The structure of $K(\g,1)$}
\def\theequation{4.\arabic{equation}}
\setcounter{equation}{0}

In this section, we will discuss the parafermion vertex operator
algebras for $k=1.$ We need the following Lemma  from \cite{DLY2}.
\begin{lem}\label{l4.1} The parafermion vertex operator algebras $K(sl_2,1)=\C$
and  $K(sl_2,2)$ is isomorphic to $L(\frac{1}{2},0).$ In
particular,  $W^3_\a=0$ in both cases, where $\a$ is a root of
$sl_2$.
\end{lem}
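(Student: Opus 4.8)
The plan is to read off the central charge of $K(sl_2,k)$ and then use the positive definite hermitian form $(,)$ of Lemma \ref{l1} to eliminate the relevant vectors. Since ${\mathcal L}(k,0)$ has central charge $\frac{3k}{k+2}$ (here $\dim sl_2=3$, $h^{\vee}=2$) while the Heisenberg algebra $M(k)$ has central charge $1$, the Virasoro element $\omega=\omega_{\mraff}-\omega_{\h}$ of $K(sl_2,k)$ has central charge $c=\frac{3k}{k+2}-1=\frac{2(k-1)}{k+2}$, which is $0$ for $k=1$ and $\frac12$ for $k=2$. For a single positive root $\a$ one has $\omega=\omega_{\a}$, so by Lemma \ref{l1} the adjoint of $L(m)$ with respect to $(,)$ is $L(-m)$.

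The case $k=1$ is immediate from $c=0$. Computing $(\omega,\omega)=(\1,L(2)L(-2)\1)=\frac{c}{2}=0$, positive definiteness of $(,)$ forces $\omega=0$, hence $L(0)=0$ on $K(sl_2,1)$ and therefore $K(sl_2,1)=K(sl_2,1)_0=\C\1=\C$. In particular $W^3_{\a}$, being homogeneous of weight $3$, vanishes.

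For $k=2$ the heart of the matter is to prove $W^3_{\a}=0$, after which the structure follows quickly. I would compute the norm $(W^3_{\a},W^3_{\a})$ directly from the explicit expression for $W^3_{\a}$ (with $k_{\a}=k=2$), moving annihilation operators to the right using $(h_{\a}(m))^{*}=h_{\a}(-m)$ and $(x_{\pm\a}(m))^{*}=x_{\mp\a}(-m)$ together with the affine relations at level $2$. Crucially, since the form on $K(sl_2,k)$ is the restriction of the contravariant (Shapovalov type) form on ${\mathcal L}(k,0)$, this norm is a purely algebraic function of $k$, computable in the generalized Verma module $V(k,0)$, and I expect it to carry a factor vanishing at $k=1$ and $k=2$ (matching the absence of an independent spin-$3$ current for the $\Z_1$ and $\Z_2$ parafermions). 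Positive definiteness of $(,)$ then gives $W^3_{\a}=0$.

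Granting $W^3_{\a}=0$, Theorem \ref{t1}(1) shows that $K(sl_2,2)$ is generated by $\omega$ alone, so it is a quotient of the universal Virasoro vertex operator algebra of central charge $\frac12$; since $(,)$ is positive definite this quotient is unitary, hence is the simple minimal model vertex operator algebra $L(\frac12,0)$, giving $K(sl_2,2)\cong L(\frac12,0)$. The main obstacle is the weight-$3$ norm computation: it is finite but lengthy, and must be organized so that the dependence on $k$ and the vanishing at $k=2$ are transparent. An alternative that sidesteps it is to first identify $K(sl_2,2)$ with $L(\frac12,0)$ by matching graded dimensions (the level-$2$ $\widehat{sl_2}$ vacuum string function equals the Ising character $\chi^{(3,4)}_{1,1}$) together with the embedding $L(\frac12,0)\hookrightarrow K(sl_2,2)$ coming from $\omega$; since the weight-$3$ space of $L(\frac12,0)$ is spanned by the non-primary descendant $L(-1)\omega$ while $W^3_{\a}$ is a Virasoro primary, one again concludes $W^3_{\a}=0$.
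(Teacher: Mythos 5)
First, note that this paper contains no internal proof to compare against: Lemma \ref{l4.1} is simply quoted from \cite{DLY2}. So your attempt has to be judged on its own completeness. Your $k=1$ argument is complete and correct, and is a nice self-contained alternative to the citation: since $sl_2$ has a single positive root, $\omega=\omega_{\a}$ (the coefficient $\frac{k(k_\a+2)}{k_\a(k+h^{\vee})}$ is $1$), its central charge is $\frac{2(k-1)}{k+2}=0$, Lemma \ref{l1} gives $(\omega,\omega)=(\1,L(2)L(-2)\1)=\frac{c}{2}=0$, and positive definiteness of $(,)$ forces $\omega=0$; hence $L(0)=0$ on $K(sl_2,1)$, so $K(sl_2,1)=K(sl_2,1)_0=\C\1$ and $W^3_{\a}\in K(sl_2,1)_3=0$.

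The $k=2$ half, however, has a genuine gap: the entire content of the statement there is the vanishing $W^3_{\a}=0$, and you never actually establish it. Your main route reduces it to the norm $(W^3_{\a},W^3_{\a})$, but you only say you \emph{expect} this norm, as a function of $k$, to carry a factor vanishing at $k=2$; that expectation happens to be correct, yet nothing in your argument certifies it, and this computation is exactly the nontrivial input that \cite{DLY2} supplies. Your fallback route is no better off, because it rests on two facts that are neither proved by you nor available in this paper: (i) that $W^3_{\a}$ is a Virasoro primary, i.e.\ $L(n)W^3_{\a}=0$ for $n\ge 1$, which is itself proved in \cite{DLY2} by a direct computation of essentially the same size as the norm computation you are trying to avoid; and (ii) the Kac--Peterson identification of the level-$2$ string function of $A_1^{(1)}$ with the Ising character, together with the identification of the graded dimension of $K(sl_2,2)$ with that string function, neither of which is set up here. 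Once either the norm computation or the primarity of $W^3_{\a}$ is in hand, the remainder of your argument does close correctly: by Theorem \ref{t1}(1), $K(sl_2,2)$ is then generated by $\omega$ alone, so it is a highest weight Virasoro module of central charge $\frac12$ with a positive definite contravariant form (Lemma \ref{l1}), and such a module is irreducible, giving $K(sl_2,2)\cong L(\frac12,0)$. So the architecture is sound, but the decisive step for $k=2$ is assumed rather than proved.
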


From Lemma \ref{l4.1}, if $k=1$ and  $\alpha$ is a long root, then
$\omega_{\alpha}=W^3_{\alpha}=0.$ This shows that $K(\g,1)=\C$ if
$\g$ is of $ADE$ types. We will restrict ourselves to the
non-simply laced simple Lie algebra in the rest of this section.
Recall that $\<\theta,\theta\>=2.$ We note that the subalgebra
$P_{\alpha}$ of $K(\g,1)$ generated by $\omega_{\alpha},
W^3_{\alpha}$ with $\a\in \Delta_+$ is isomorphic to $K(sl_2,2)$
if $\<\alpha,\alpha\>=1$ and isomorphic to $K(sl_2,3)$ if
$\<\alpha,\alpha\>=\frac{2}{3}.$ Furthermore, $K(sl_2,2)$ is
isomorphic to the rational vertex operator algebra
$L(\frac{1}{2},0)$ which is the irreducible highest weight module
for the Virasoro algebra with central charge $\frac{1}{2}.$ So in
the case that $\g$ is of type $B_l,C_l,F_4,$ $K(\g,1)$ is
generated by $\omega_{\alpha}$ with $\alpha\in \Delta_+$ being
short roots. If $\g$ is of type $G_2,$ the situation is more
complicated. The result is as follows: the parafermion vertex
operator algebra associated to the non-simply laced simple Lie
algebra with $k=1$ is the same as the parafermion vertex operator
algebra obtained from the simply laced simple Lie algebra whose
Dynkin diagram is obtained from the Dynkin diagram of the
non-simply laced algebra by deleting the long roots with $k=3$ in
the case of $G_2$ and with $k=2$ in the rest of cases. This result
has been observed in \cite{G} by using the partition functions.
Recall that the central charge of the parafermion vertex operator
algebra is given by $\frac{klh-lh^{\vee}}{k+h^{\vee}}.$

In the following, we fix an orthonormal basis $\{\e_1,...,\e_l\}$
of Euclidean space $\R^n$ and we refer the reader to \cite{H} for
the details on root systems. We denote the set of short roots by
$\Delta^s$ and the set of positive short roots by $\Delta_+^s.$

Recall that $Q$ is the root lattice of $\g.$ Denote by $Q_L$ the sublattice generated by
the long roots. The following lemma is important in our discussion
below.
\begin{lem}\label{l4.2} Assume that $\alpha,\beta\in \Delta^s$ such that $\alpha\in Q_L\pm
\beta$, then $\omega_{\alpha}=\omega_{\beta}$, $W^3_{\alpha}=\pm
W^3_{\beta}$ in $K(\g,1).$
\end{lem}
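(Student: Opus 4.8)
The plan is to reduce everything to an explicit computation in $K(\g,1)\subseteq \mathcal{L}(1,0)$, using the hypothesis $\alpha\in Q_L\pm\beta$ to relate the $sl_2$-triples $\g^\alpha$ and $\g^\beta$ through the root-lattice structure. Since $\alpha,\beta$ are short roots, we have $k_\alpha=k_\beta=3$ (or $=2$, depending on type), and both $\omega_\alpha,\omega_\beta$ and $W^3_\alpha,W^3_\beta$ live inside $K(\g,1)$. The key structural input is that $\gamma:=\alpha-\beta$ (or $\alpha+\beta$) lies in $Q_L$, so $\gamma$ is a sum of long roots; because $k=1$, Lemma \ref{l4.1} tells us that for every long root $\delta$ we have $\omega_\delta=W^3_\delta=0$, and more importantly the long-root directions behave trivially in the parafermion quotient. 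First I would try to exploit the operator $x_\gamma$ associated to the element $\gamma\in Q_L$: conjugation by the group-like element $e^{x_\gamma}$ (or the corresponding automorphism of $\widehat\g$ coming from the Weyl group / lattice translation) should carry $x_{\pm\alpha}$ to $x_{\pm\beta}$ up to scalars, and $h_\alpha$ to $h_\beta$, while fixing $K(\g,1)$ setwise.

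The cleanest route, and the one I would pursue first, is to show that the generating vectors are actually equal rather than merely conjugate. I would write out $\omega_\alpha$ and $\omega_\beta$ explicitly using the formulas
\[
\omega_\alpha=\frac{1}{2k_\alpha(k_\alpha+2)}\bigl(-h_\alpha(-1)^2\1+k_\alpha x_\alpha(-1)x_{-\alpha}(-1)\1+k_\alpha x_{-\alpha}(-1)x_\alpha(-1)\1\bigr),
\]
with $k_\alpha=k_\beta$, and then argue that the difference $\omega_\alpha-\omega_\beta$ is annihilated by the relevant operators and has a norm zero with respect to the positive definite hermitian form $(,)$. The hermitian form is the decisive tool: since $(,)$ is positive definite on $K(\g,1)$, it suffices to prove $(\omega_\alpha-\omega_\beta,\,\omega_\alpha-\omega_\beta)=0$, i.e. to compute the three inner products $(\omega_\alpha,\omega_\alpha)$, $(\omega_\beta,\omega_\beta)$, $(\omega_\alpha,\omega_\beta)$ and check they coincide; and similarly $(W^3_\alpha\mp W^3_\beta,\,W^3_\alpha\mp W^3_\beta)=0$. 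These norms are determined by $k_\alpha=k_\beta$ together with the level $k=1$ and the combinatorics of which monomials $h_\alpha(s)x_{\pm\alpha}(t)\1$ survive in $\mathcal{L}(1,0)$; the condition $\alpha\in Q_L\pm\beta$ enters precisely in matching the cross term $(\omega_\alpha,\omega_\beta)$ to the diagonal ones, because it forces the relevant vacuum-expectation computations to agree.

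The main obstacle I anticipate is the cross-term $(\omega_\alpha,\omega_\beta)$ and, more seriously, $(W^3_\alpha,W^3_\beta)$, where one must evaluate inner products of products of $x_{\pm\alpha}$ and $x_{\pm\beta}$ modes acting on the vacuum in the level-one module. Unlike the diagonal terms, these require knowing the precise $\widehat\g$-commutators $[x_\alpha(m),x_{-\beta}(n)]$, which involve the structure constants $N_{\alpha,-\beta}$ and the relation $\alpha-\beta\in Q_L$. The point where $k=1$ becomes essential is that in $\mathcal{L}(1,0)$ the integrability relations $x_\delta(-1)^2\1=0$ for long roots $\delta$, and more generally the level-one null vectors, collapse most of these expressions; I expect that the hypothesis $\alpha\in Q_L\pm\beta$ is exactly what is needed to identify $x_{\pm\alpha}(-1)\1$ with a scalar multiple of $x_{\pm\beta}(-1)\1$ modulo vectors killed by the form. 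If the direct norm computation proves too unwieldy, the fallback is the automorphism approach: exhibit an inner automorphism $\sigma$ of $\widehat\g$ preserving level, fixing $\h$ appropriately and sending $\g^\beta$ to $\g^\alpha$, show $\sigma$ restricts to a vertex operator algebra automorphism of $K(\g,1)$ fixing the Virasoro element $\omega$, and then verify $\sigma(\omega_\beta)=\omega_\alpha$ and $\sigma(W^3_\beta)=\pm W^3_\alpha$ directly from the defining formulas, where the sign $\pm$ tracks whether $\sigma$ sends $x_\beta\mapsto x_\alpha$ or $x_\beta\mapsto x_{-\alpha}$ (recall $W^3_{-\alpha}=-W^3_\alpha$).
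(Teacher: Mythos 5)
Your fallback (automorphism) route is essentially the approach the paper takes, but as you describe it, it stops one step short of proving the lemma, and that missing step is the entire content. Two things are absent. First, both of your routes quietly assume that $\gamma=\alpha-\beta$ is a root (you speak of ``the operator $x_\gamma$ associated to the element $\gamma\in Q_L$''), but the hypothesis only gives $\gamma\in Q_L$; the paper's first step is to prove $\gamma$ is an honest long root, via $\langle\gamma,\gamma\rangle=\langle\alpha,\alpha\rangle+\langle\beta,\beta\rangle-2\langle\alpha,\beta\rangle$ being a positive even integer and a short case analysis ($G_2$ versus the rest) forcing $\langle\gamma,\gamma\rangle=2$. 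Second, and more seriously: exhibiting an automorphism $\sigma$ of $K(\g,1)$ with $\sigma(\omega_\beta)=\omega_\alpha$ and $\sigma(W^3_\beta)=\pm W^3_\alpha$ proves only that the generators are \emph{conjugate}, not \emph{equal}; an automorphism can fix $\omega$ and still permute the $\omega_\alpha$'s. The decisive point in the paper is that the automorphism $\tau_\gamma=e^{x_{-\gamma}(0)}e^{-x_\gamma(0)}e^{x_{-\gamma}(0)}$ restricts to the \emph{identity} on $K(\g,1)$. This is where $k=1$ enters structurally: the subalgebra $U$ of $\mathcal{L}(1,0)$ generated by $x_\phi(-1)\1$ for long roots $\phi$ is the lattice vertex operator algebra $V_{Q_L}$, whose Virasoro element $\omega_U=\frac12\sum_i u^i(-1)^2\1$ is built purely from Heisenberg modes; since those modes kill $K(\g,1)$, one gets $L_U(-1)K(\g,1)=0$, hence $u_mK(\g,1)=0$ for all $u\in U$, $m\ge 0$, in particular $x_{\pm\gamma}(0)=0$ on $K(\g,1)$, so $\tau_\gamma=1$ there. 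Nothing in your proposal plays this role.

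Your primary route (showing the norm of $\omega_\alpha-\omega_\beta$ and of $W^3_\alpha\mp W^3_\beta$ vanishes for the positive definite hermitian form) is sound in principle, since positive definiteness does reduce equality to a norm computation; but the proposal never performs the cross-term computations, which is exactly where the hypothesis $\alpha\in Q_L\pm\beta$ and the value $k=1$ must enter, and these are heavy for $W^3_\alpha$ (six terms, cubic in modes, with structure constants whose signs matter). Moreover, one of your supporting claims is false: $x_{\pm\alpha}(-1)\1$ cannot be identified with a scalar multiple of $x_{\pm\beta}(-1)\1$ ``modulo vectors killed by the form,'' because the form on $\mathcal{L}(1,0)$ is positive definite (there is no radical) and these are nonzero vectors of distinct $\h$-weights $\pm\alpha\neq\pm\beta$, hence linearly independent. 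So as written, neither route closes: the first is an unexecuted computation resting partly on an incorrect identification, and the second omits the identity-on-$K(\g,1)$ argument that is the actual mechanism of the paper's proof.
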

\begin{proof} The result is obvious if $\alpha=\pm \beta.$
We only need to deal with the case that $\a\ne \pm\beta.$ Suppose that
$\alpha\in Q_L+\beta.$

We first prove that
 $\gamma=\alpha-\beta$ is a long root. Note that $\<\gamma,\gamma\>=\<\alpha,\alpha\>+\<\beta,\beta\>-2\<\alpha,\beta\>$
  is a positive even integer as $Q_L$ is a positive definite even lattice.
If $\g$ is of type of $G_2,$ then
$\<\gamma,\gamma\>=\frac{4}{3}-2\<\alpha,\beta\>$ is an even
integer greater than or equal to $2.$ This forces
$\<\alpha,\beta\>$ to be $-\frac{1}{3}$ and $\gamma$ is a long
root. If $\g$ is not of type $G_2,$
$\<\gamma,\gamma\>=2-2\<\alpha,\beta\>.$ The possible values for
 $ \<\alpha,\beta\>$ is $0$ or $-1.$ If  $\<\alpha,\beta\>=-1$, then $\alpha=-\beta$
 and this is impossible from the assumption. So $\<\alpha,\beta\>=0$ and
$\<\gamma,\gamma\>=2.$ Again $\gamma$ is a root.

Let $\sigma_{\gamma}$ be the reflection associated to root
$\gamma.$ It is easy to verify that
$\sigma_{\gamma}(\alpha)=\beta.$ Let
$\tau_{\gamma}=e^{x_{-\gamma}(0)}e^{-x_{\gamma}(0)}e^{x_{-\gamma}(0)}.$
Then $\tau_{\gamma}$ is an automorphism of ${\mathcal{L}}(1,0).$
Since $\tau_{\gamma}$ preserves $h(-1)\1$ for $h\in \h$, we see
that $\tau_{\gamma}$ preserves $K(\g,1).$ In fact,
$\tau_{\gamma}(\omega_{\alpha})=\omega_{\beta}$,
$\tau_{\gamma}(W^3_{\alpha})=W^3_{\beta}$. So it is sufficient to
show that $\tau_{\gamma}=1$ on $K(\g,1).$

Let $\Delta^l$ be a subset of $\Delta$ consisting of long roots.
Then $\Delta^l$ is a root lattice which is orthogonal union of
irreducible root systems of ADE types. Consider the vertex operator
subalgebra $U$ of ${\mathcal{L}}(1,0)$ generated by
$x_{\phi}(-1)\1$ for $\phi\in \Delta^l.$ Then $U=V_{Q_L}$ is a
lattice vertex operator algebra. It is well known that the
Virasoro element of $U$ is given by
$$\omega_U=\frac{1}{2}\sum_{i=1}^su^i(-1)^2\1$$
where $\{u^1,...,u^s\}$ is an orthonormal basis of $\sum_{\phi\in
\Delta^l}\C h_{\phi}$ with respect to $\<.\>.$ Since
$$u^i(n)K(\g,1)=0$$
 for $i=1,...,s$ and $n\geq 0$, we see that
$L_U(-1)K(\g,1)=0$, where $L_U(-1)$ is the component operator of
$Y(\omega_U,z)=\sum_{m\in \Z}L_U(m)z^{-m-2}.$ This implies that
$u_mK(\g,1)=0$ for all $u\in U$ and $m\geq 0.$ In particular,
$x_{\gamma}(0)=x_{-\gamma}(0)=0$ on $K(\g,1).$ As a result,
$\tau_{\gamma}=1$ on  $K(\g,1).$ So we have proved that if
$\a+Q_L=\b+Q_L$, then $\omega_{\a}=\omega_{\b},
W^3_{\alpha}=W^3_{\beta}$.

Similarly, if $\a\in Q_L-\b$, then
$\omega_{\a}=\omega_{-\b}=\omega_{\b}$,
$W^3_{\alpha}=W^3_{-\beta}=-W^3_{\beta}$. The proof is complete.
\end{proof}

We also need to generalize (3) of Theorem \ref{t1} to an arbitrary
subalgebra of $\g.$
\begin{lem}\label{l4.3} Let $\g_1$ be simple subalgebra of $\g$ with root
system $\Delta_1\subset \Delta.$ Then the vertex operator
subalgebra of $K(\g,k)$ generated by $\omega_{\a}, W^3_{\a}$ for
$\alpha\in \Delta_1$ is isomorphic to $K(\g_1,k_1),$  where $k_1$
is defined as follows: $k_1=k$ if $\Delta_1\nsubseteq \Delta^s;$
$k_1=3k$ if $\g$ is of type $G_2$ and $\Delta_1\subset \Delta^s$;
$k_1=2k$ if $\g$ is of type $B_l,C_l,F_4$ and $\Delta_1\subset
\Delta^s$.
\end{lem}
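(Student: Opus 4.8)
The plan is to reduce the statement to Theorem \ref{t1}(1) applied to $\g_1$, by showing that the vectors $\omega_\a, W^3_\a$ for $\a\in\Delta_1$, built inside $\mathcal{L}(k,0)$ from the data of $\g$ at level $k$, are \emph{literally the same vectors} as the parafermion generators built from $\g_1$ at level $k_1$. First I would fix normalizations: let $\<,\>_1$ be the invariant form on $\g_1$ scaled so that the long roots of $\g_1$ have squared length $2$. Since the restriction of $\<,\>$ to the simple algebra $\g_1$ is a nonzero invariant form, $\<,\>|_{\g_1}=\tfrac{1}{c}\<,\>_1$ for a unique scalar $c$, and a direct check gives $c=2/L$ where $L=\<\a,\a\>$ for $\a$ a long root of $\g_1$. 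Thus $c=1$ when $\Delta_1\not\subseteq\Delta^s$, $c=3$ when $\g$ is of type $G_2$ and $\Delta_1\subseteq\Delta^s$, and $c=2$ in the remaining cases, so that $k_1=ck$ matches the three cases in the statement.

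Next I would record the key invariance. For $\a\in\Delta_1$ the coroot $h_\a$ and the root vectors $x_{\pm\a}$ are intrinsic Lie-theoretic data, independent of whether $\a$ is viewed inside $\g$ or inside $\g_1$, so the only number entering the formulas for $\omega_\a$ and $W^3_\a$ that could change is $k_\a$. Computed in $\g$ it is $k_\a=\tfrac{2}{\<\a,\a\>}k$, while computed in $\g_1$ it is $\tfrac{2}{\<\a,\a\>_1}k_1=\tfrac{2}{c\<\a,\a\>}\cdot ck=\tfrac{2}{\<\a,\a\>}k$, the same number. Hence $\omega_\a$ and $W^3_\a$ are the same vectors of $\mathcal{L}(k,0)$ whichever picture we use. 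Equivalently, $\widehat{\g_1}=\g_1\otimes\C[t,t^{-1}]\oplus\C K$ (with Cartan $\h_1\subseteq\h$ spanned by the $h_\a$) acts on $\mathcal{L}(k,0)$ at level $k_1$, because the $\hsl$ attached to a long root of $\g_1$ already acts at level $\tfrac{2}{L}k=k_1$, and the level of $\widehat{\g_1}$ equals the level of its long-root $\hsl$.

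To finish I would identify the ambient algebra in the $\g_1$-picture. Restricting $\mathcal{L}(k,0)$ to $\widehat{\g_1}$ makes it integrable of level $k_1$, since the $x_\a(n)$ with $\a\in\Delta_1$ act locally nilpotently already as operators of the $\widehat\g$-module $\mathcal{L}(k,0)$. Let $L_1$ be the vertex operator subalgebra of $\mathcal{L}(k,0)$ generated by $x_\phi(-1)\1$ and $h_\phi(-1)\1$ for $\phi\in\Delta_1$, equipped with its own Sugawara Virasoro vector; as a $\widehat{\g_1}$-module it is cyclic on $\1$, lies in category $\mathcal{O}$, and is integrable, hence completely reducible and indecomposable, so irreducible. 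Therefore $L_1\cong\mathcal{L}^{\g_1}(k_1,0)$ as vertex operator algebras, and under this isomorphism the parafermion algebra $K(\g_1,k_1)$ (the commutant of the Heisenberg algebra generated by $\h_1$) is carried onto the subalgebra of $L_1$ generated by the very same $\omega_\a, W^3_\a$, $\a\in\Delta_1$, by Theorem \ref{t1}(1) for $\g_1$. Since each $\omega_\a, W^3_\a$ also lies in $K(\g,k)$, this subalgebra is precisely the one named in the statement, and it is isomorphic to $K(\g_1,k_1)$.

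The main obstacle is the identification $L_1\cong\mathcal{L}^{\g_1}(k_1,0)$: one must be certain that the $\widehat{\g_1}$-currents generate the \emph{simple} affine vertex operator algebra at the correct level $k_1$, rather than a proper quotient or a non-simple object. This is exactly where the integrability of $\mathcal{L}(k,0)$ over $\widehat{\g_1}$ and the complete reducibility of integrable category-$\mathcal{O}$ modules are needed; by comparison, verifying $c=2/L$ and the ambient-independence of $k_\a$ is routine bookkeeping.
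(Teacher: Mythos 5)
Your proof is correct and follows essentially the same route as the paper: the paper's own proof is a one-line deferral to Proposition 4.6 of \cite{DW}, whose method is exactly what you spell out --- integrability of $\mathcal{L}(k,0)$ as a $\widehat{\g_1}$-module of level $k_1$ identifies the vertex subalgebra generated by the $\g_1$-currents with the simple affine vertex operator algebra $\mathcal{L}^{\g_1}(k_1,0)$, after which Theorem \ref{t1}(1) applied to $\g_1$ finishes the argument. One bookkeeping slip that does not propagate: as forms on the Lie algebra $\g_1$ the correct relation is $\<,\>|_{\g_1}=c\,\<,\>_1$ rather than $\<,\>|_{\g_1}=\tfrac{1}{c}\<,\>_1$ (forms on $\h_1$ and on $\h_1^*$ scale inversely, so this is equivalent to the dual-space relation $\<\a,\a\>_1=c\,\<\a,\a\>$ that you actually use later), and since your computations of $k_\a$ and of the level all rely on the correct dual-space relation and on the long-root $\hsl$ inside $\widehat{\g_1}$, nothing downstream is affected.
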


\begin{proof} The proof is similar to that of Proposition 4.6 of \cite{DW}
by noting that for each $\alpha\in \Delta$, ${\cal L}(k,0)$ is an
integrable module for the affine Lie algebra $\widehat{\g_\a}.$
\end{proof}

We are now ready to discuss the parafermion vertex operator
algebra $K(\g,1)$ if $\g$ is a non-simply laced simple Lie algebra.
We denote the root system whose simple roots are the short simple
roots of the non-simply laced Lie algebra by $\Delta'.$

\subsection{$B_l$} The root system is given by
$$\Delta=\{\pm \e_i, \pm(\e_i\pm \e_j)|i,j=1,...,l, i\ne j\}$$
with simple roots
$$\{\e_1-\e_2,...,\e_{l-1}-\e_l, \e_l\}$$
and
$\Delta'=\{\pm \e_l\}$  is the root system of type $A_1.$

 In this case, $K(B_l,1)$ is generated by $\omega_{\e_i}$ for $i=1,...,l$
and each $\omega_{\e_i}$ generates a vertex operator algebra
isomorphic to $L(\frac{1}{2},0).$ Note that the central charge of
$K(B_l,1)$ is equal to $\frac{1}{2}.$ So $K(B_l,1)$ is an
extension of $L(\frac{1}{2},0).$ But the only extension of
$L(\frac{1}{2},0)$ is itself due to the integral weight
restriction. As a result, $K(B_l,1)=L(\frac{1}{2},0)$ and
$\omega_{\e_i}=\omega$ for all $i.$ One can also use Lemma
\ref{l4.2} to see that $\omega_{\e_i}=\omega_{\e_j}$ for all
$i,j.$

\subsection{$C_l$}\label{s4.2} Assume that $l\geq 3.$ The root system is given by
$$\Delta=\{\pm 2\e_i, \pm(\e_i\pm \e_j)|i,j=1,...,l, i\ne j\}$$
with simple roots
$$\{\e_1-\e_2,...,\e_{l-1}-\e_l, 2\e_l\}$$
and
$$\Delta'=\{\e_i-\e_j|i\ne j\}$$
is a root system of type $A_{l-1}.$ By Lemma \ref{l4.2}, we see
that $\omega_{\e_i-\e_j}=\omega_{\e_i+\e_j}$ if $i\ne j.$ Thus
$K(C_l,1)$ is generated by $\omega_{\e_i-\e_j}$ for $i<j.$

Note that
$$\g_1=\sum_{i\ne j}(\C x_{\e_i-\e_j}+\C h_{\e_i-\e_j})$$
is a subalgebra of $C_l$ isomorphic to $A_{l-1}.$ As a result, the
vertex operator algebra $K(C_l,1)$ is isomorphic to $K(A_{l-1},2)$
by Lemma \ref{l4.3}.

\subsection{$F_4$} The root system is given by
$$\Delta=\{\pm \e_i, \pm(\e_i\pm \e_j), \pm\frac{1}{2}(\e_1\pm\e_2\pm \e_3\pm \e_4)|i,j=1,...,4, i\ne j\}$$
with simple roots
$$\{\e_2-\e_3, \e_3-\e_4, \e_4, \frac{1}{2}(\e_1-\e_2-\e_3-\e_4)\}$$
and
$$\Delta'=\{\pm \e_4, \pm\frac{1}{2}(\e_1-\e_2-\e_3\pm \e_4)\}$$
is a root system of type $A_2.$ By Lemma \ref{l4.3} again,
$K(F_4,1)$ is isomorphic to $K(A_2,2).$

We now determine $K(A_2,2)$ explicitly. Let
$\{\alpha_1,\alpha_2\}$ be the simple roots of $\Delta.$ Set
$\omega'=\omega-\omega_{\a_1}.$ Then $\omega'$ is a Virasoro
vector with central charge $\frac{7}{10}.$ By Lemma \ref{l1},
$K(A_2,2)$ is a unitary representations for both $Vir_{\a_1}$ and
the Virasoro algebra generated by the component operators of
$Y(\omega',z).$ Let $U$ be a vertex operator subalgebra of
$K(A_2,2)$ generated by $\omega_{\alpha_1}$ and $\omega'.$ Then
$U$ is isomorphic to rational vertex operator algebra
$L(\frac{1}{2},0)\otimes L(\frac{7}{10},0).$ So $K(A_2,2)$ is a
completely reducible $U$-module. Note that the irreducible
$L(\frac{7}{10},0)$-modules are $L(\frac{7}{10},h)$ with
$$h=0,\frac{7}{16}, \frac{3}{80}, \frac{3}{2}, \frac{3}{5}, \frac{1}{10}$$
\cite{W} and $L(\frac{1}{2},0)$ is a rational vertex operator
algebra with 3 irreducible modules $L(\frac{1}{2},h')$ with
$h'=0,\frac{1}{2},\frac{1}{16}$ \cite{DMZ}, \cite{W}. Since
$K(A_2,2)$ only has integral weight, we see that as a $U$-module,
$K(A_2,2)$ can only be $U$ or $U\oplus
n(L(\frac{1}{2},\frac{1}{2})\otimes L(\frac{7}{10},\frac{3}{2})).$
But $K(A_2,2)_2$ is $3$-dimensional with basis $\omega_{\a_1},
\omega_{\a_2}, \omega_{\a_1+\a_2}$, thus we conclude that
$$K(A_2,2)=L(\frac{1}{2},0)\otimes L(\frac{7}{10},0)\oplus L(\frac{1}{2},\frac{1}{2})\otimes  L(\frac{7}{10},\frac{3}{2}).$$

\subsection{$G_2$} The root system is given by
$$\Delta=\{\pm(\e_1-\e_2),\pm(\e_2-\e_3), \pm(\e_1-\e_3), \pm (2\e_1-\e_2-\e_3),\pm(2\e_2-\e_1-\e_3), \pm(2\e_3-\e_1-\e_2)\}$$
with simple roots
$$\{\e_1-\e_2, -2\e_1+\e_2+\e_3\}$$ and $\Delta'=\{\pm (\e_1-\e_2)\}$
is a root system of type $A_1.$ By Lemma \ref{l4.2}, we deduce
that $K(G_2,1)$ is generated by $\omega_{\e_1-\e_2}$ and
$W_{\e_1-\e_2}^3$. Thus by Lemma \ref{l4.3}, $K(G_2,1)$ is
isomorphic to $K(A_1,3)$ which is isomorphic to
$L(\frac{4}{5},0)\oplus L(\frac{4}{5},3)$ \cite{DLY2}. The vertex
operator algebra $K(A_1,3)$ is rational and the irreducible
modules has been classified in \cite{KMY}.

\end{document}